\providecommand{\tabularnewline}{\\}
\providecommand{\algorithmname}{Algorithm}
\numberwithin{equation}{section}
\numberwithin{figure}{section}
\theoremstyle{plain}
\newtheorem{thm}{\protect\theoremname}
  \theoremstyle{plain}
  \newtheorem{lem}[thm]{\protect\lemmaname}
  \theoremstyle{remark}
  \newtheorem{rem}[thm]{\protect\remarkname}
  \providecommand{\lemmaname}{Lemma}
  \providecommand{\remarkname}{Remark}
\providecommand{\theoremname}{Theorem}
\begin{document}

\title{Multi-objective integer programming: An improved recursive algorithm}

\author{Melih Ozlen, Benjamin A.~Burton, Cameron A. G. MacRae}
\maketitle
\begin{abstract}
This paper introduces an improved recursive algorithm to generate
the set of all nondominated objective vectors for the Multi-Objective
Integer Programming (MOIP) problem. We significantly improve the earlier
recursive algorithm of Özlen and Azizo\u{g}lu by using the set of
already solved subproblems and their solutions to avoid solving a
large number of IPs. A numerical example is presented to explain the
workings of the algorithm, and we conduct a series of computational
experiments to show the savings that can be obtained. As our experiments
show, the improvement becomes more significant as the problems grow
larger in terms of the number of objectives.\end{abstract}
\begin{keywords}
Multiple objective programming, Integer programming
\end{keywords}

\section{Introduction\label{sec:Introduction}}

Multi-Objective Integer Programming (MOIP) has drawn the attention
of researchers in recent years, as discussed in section \ref{sec:lit}.
MOIP is seen as an extension to classical Integer Programming (IP)
which has already been used in a wide variety of decision making environments
including logistics, planning, location/allocation, scheduling, routing,
and so on. Multiple objectives enable decision makers to consider
not just a single objective but a \emph{set} of objectives simultaneously,
such as cost, profit, waste, environmental impact, risk, etc. 

In this study we deal with the noninteractive and exact solution of
the MOIP problem, in the case where there is no information available
on the form of the utility function. That is, we focus on algorithms
for generating the full set of all nondominated objective vectors.
However, in Section \ref{sec:lit} we also discuss the literature
on noninteractive and exact methods where information about the utility
function is known. We refer the reader to \citet{ehrgott2005book}
for a more detailed discussion on multi-objective optimisation, theory,
and methodology. 

The main contribution of this paper is to improve the recursive algorithm
of \citet{ozlenazizoglu2009ejor} for generating the full nondominated
set. A key drawback of the former algorithm is that it does not make
use of any information obtained from the already solved subproblems.
Our new algorithm incorporates this valuable information, and is able
to avoid solving a large set of intermediate IPs as a result. 

The remainder of this paper is organised as follows. Section \ref{sec:lit}
reviews the related literature, and in Section \ref{sec:The-problem-and}
we describe the problem and explain the original recursive algorithm.
In Section \ref{sec:Improved-recursive algorithm} we introduce our
improved algorithm, and Section \ref{sec:Numerical-example} offers
a detailed illustration of its workings using an instance of quad-objective
assignment problem (QOAP). We present the results of a computational
experiment in Section \ref{sec:Computational-experience}, and discuss
the savings that can be obtained using this new algorithm. We conclude
and provide several future research directions in Section \ref{sec:conclusion}.

\section{Literature\label{sec:lit}}

Here we survey the literature on exact noninteractive approaches to
MOIP, beginning with the special case in which an explicit utility
function is known, and then moving towards the most general case of
MOIP.

For the special case in which there is an explicit utility function,
we aim to identify a solution that optimises the given utility function.
\citet{abbaschaabane2006,jorge2009} deal with the case of a linear
utility function and propose methods to identify an optimal solution,
which must be one of the extreme supported nondominated objective
vectors. \citet{ozlenazizogluburtonJOGO} handle the case of a nonlinear
utility function, where the solution may be any member of the nondominated
set. 

In a more general case of a linear but unknown utility function, it
is sufficient to identify all extreme supported nondominated objective
vectors, since the optimal solution must come from this set. \citet{przybylskigandibleuxehrgotti2010joc,ozpeynirci20102302}
propose similar algorithms to identify all extreme nondominated objective
vectors for the MOIP problem. Both algorithms use a weighted single
objective function and partition the weight space in order to enumerate
the extreme supported nondominated set; an approach first proposed
for Multi-Objective Linear Programming (MOLP) by \citet{bensonSun2000,bensonsun2002}.

In the most general case where there is no information available about
the utility function, the aim is to generate all nondominated objective
vectors, since these can optimise an arbitrary linear or nonlinear
utility function. \citet{kleinhannan1982ejor} develop an approach
based on the sequential solutions of the single-objective models.
Their algorithm generates a subset, but not necessarily the whole
set, of all nondominated objective vectors. \citet{sylvacrema2004ejor}
improve the approach of \citet{kleinhannan1982ejor} by defining a
weighted combination of all objectives, and their approach guarantees
to generate the full nondominated set. The main drawback of their
method is that with every iteration a number of binary variables and
constraints are added to the subproblems, making it impractical to
solve problems which require a large number of iterations. \citet{laumanns_et_al:DSP:2005:246,laumannsthiele2006}
develop an adaptive version of the $\epsilon$-constraint method to
generate all nondominated objective vectors; the main handicap of
their algorithm is that it needs to solve a large number of IPs to
generate weak nondominated objective vectors as it progresses. 

\citet{przybylskigandibleuxehrgott2010do} propose a generalisation
of the two-phase algorithm for MOIP, where first extreme supported
nondominated objective vectors are identified, and then the remaining
nondominated vectors are identified using this earlier set. For the
first phase, their algorithm requires an efficient method of generating
extreme supported nondominated objective vectors for MOIP. This is
relatively easy for problems with unimodular constraint sets, such
as assignment, transportation or minimum cost network flow problems.
In these easier cases, one can use any algorithm for generating extreme
nondominated objective vectors for the MOLP problem; see \citet{burtonozlen}
for a recent discussion on this topic. In general, however, generating
all extreme supported nondominated objective vectors for MOIP is hard,
as discussed in \citet{przybylskigandibleuxehrgotti2010joc,ozpeynirci20102302}.
Likewise, the second phase of their algorithm runs well for specific
well-studied problems such as the assignment problem, but for general
MOIP it remains difficult to use and requires problem-specific implementation;
see \citet{Przybylskietal2009}.

An alternative and general approach for generating all nondominated
objective vectors for MOIP is given by \citet{ozlenazizoglu2009ejor},
whose algorithm recursively identifies objective efficiency ranges
using problems with fewer objectives. This algorithm forms the basis
for this paper, and we describe it in detail in the following section.

\section{The problem and the recursive algorithm\label{sec:The-problem-and}}

In its most general form the MOIP problem is defined as:

\smallskip{}

Min $f_{1}(x),f_{2}(x),\ldots f_{k-1}(x),f_{k}(x)$

s.t $x\in X$

where $X$ is the set of feasible points defined by $Ax=b,x_{j}\geq0$
and $x_{j}\mathbb{\in Z}$ for all $j\in\{1,2,\ldots,n\}$.

\smallskip{}

The individual objectives are defined as $f_{1}(x)=\sum_{j=1}^{n}c_{1j}x_{j}$,
$f_{2}(x)=\sum_{j=1}^{n}c_{2j}x_{j}$, \ldots, and $f_{k}(x)=\sum_{j=1}^{n}c_{kj}x_{j}$,
where $c_{ij}\in\mathbb{Z}$ for all $i\in\{1,2,\ldots,k\}$ and $j\in\{1,2,\ldots,n$\}. 

A point $x'\in X$ is called \emph{$k$-objective efficient} if and
only if there is no $x\in X$ such that $f_{i}(x)\leq f_{i}(x')$
for each $i\in\{1,2,\ldots,k\}$ and $f_{i}(x)<f_{i}(x')$ for at
least one $i$. The resulting objective vector $(f_{1}(x'),f_{2}(x'),\ldots,f_{k}(x'))$
is said to be \emph{$k$-objective nondominated}. There may be many
efficient points in the decision space that correspond to the same
nondominated objective vector, and so it can be extremely costly to
generate all efficient points. Therefore the focus of this paper (as
with most other papers of this type) is to generate the smaller set
of nondominated vectors in objective space.

\citet{ozlenazizoglu2009ejor} describe a recursive algorithm to generate
the full set of nondominated objective vectors for the MOIP problem.
A key tool in their algorithm is Constrained Lexicographic Multi Objective
Integer Programming (CLMOIP), which is formulated as:

\smallskip{}

Lexicographic objective 1: Min $f_{1}(x),f_{2}(x),\ldots f_{k-1}(x)$

Lexicographic objective 2: Min $f_{k}(x)$

s.t.\hfill$(\ast)$

$f_{k}(x)\leq l_{k}$

$x\in X$.

\smallskip{}

For any value of the bound $l_{k}$, each solution to this CLMOIP
problem yields a nondominated objective vector for our original MOIP
problem. The algorithm of \citet{ozlenazizoglu2009ejor} essentially
operates by repeatedly solving this CLMOIP problem, storing any solutions
that are found, and then shrinking the bound $l_{k}$ in order to
generate new solutions (eventually terminating when the bound $l_{k}$
is so small as to render the constraints infeasible). The recursion
arises because the $(k-1)$-objective version of this same algorithm
is used to minimise the first lexicographic objective above. Algorithm
\ref{Flo:alg-org} describes the full process in pseudocode; see \citet{ozlenazizoglu2009ejor}
for details and proofs. The set $ND_{k}$ returned by Algorithm \ref{Flo:alg-org}
contains all $k$-objective non-dominated objective vectors for the
original MOIP problem. 
\begin{lem}
If $M$ is the maximum value of $f_{k}$ amongst all CLMOIP solutions,
then after solving this CLMOIP problem we have identified all solutions
to our original MOIP problem with $M=f_{k}\leq l_{k}$ (and typically
several with $f_{k}<M$ also) and $M$ provides an upper bound on
the $f_{k}$ values of all nondominated objective vectors satisfying
the constraint $f_{k}(x)\leq l_{k}$.\end{lem}
\begin{proof}
We only summarise the proof, see \citet{ozlenazizoglu2009ejor} for
a more detailed version of this proof.

i) For any MOIP problem where we minimise $k$ objectives, any nondominated
objective vector providing an upper bound on $f_{k}$ values of all
nondominated objective vectors should be nondominated with respect
to first $k-1$ objectives otherwise this point would not be nondominated.

ii) Any MOIP problem with the additional constraint, $f_{k}(x)\leq l_{k}$,
still is a MOIP problem so i) holds, thus any solution providing an
upper bound on $f_{k}$ should be nondominated with respect to first
$k-1$ objectives for the constrained MOIP problem.

iii) Any CLMOIP problem defined above generates all nondominated objective
vectors with respect to first $k-1$ objectives and at the same time
minimises the $k^{th}$ objective to make sure all the objective vectors
generated are nondominated for $k^{th}$ objective and thus nondominated
with respect to all $k$ objectives.

iv) Following i), ii) and iii), if $M$ is the maximum value of $f_{k}$
amongst all CLMOIP solutions, there cannot be any nondominated objective
vector with $M<f_{k}\leq l_{k}$ and $M$ provides an upper bound
on the $f_{k}$ values of all nondominated objective vectors satisfying
the constraint on $f_{k}.$ 
\end{proof}
The minimum decrement of the objective function is $1$, due to integer
objective coefficients, which allows us to replace the bound $l_{k}$
with $M-1$ in subsequent CLMOIP runs, as seen in Step 2 of Algorithm
\ref{Flo:alg-org}.

\begin{algorithm}[h]
\caption{ \citet{ozlenazizoglu2009ejor}'s recursive algorithm for the original
MOIP problem}
\label{Flo:alg-org}\medskip{}

Step 0. Set $l_{k}=\infty$ and initialise $ND_{k}$ to the empty
set.

\medskip{}

Step 1. Solve the CLMOIP problem $(\ast)$, using Algorithm \ref{Flo:alg-org}
to optimise the first $k-1$ objectives. 

\hspace{1.34cm}If the problem is infeasible then,

\hspace{1.34cm}\qquad{}STOP.

\hspace{1.34cm}Let the ($k-1$)-objective nondominated set be $ND_{k-1}^{*}$

\medskip{}

Step 2. $ND_{k}=ND_{k}\cup ND_{k-1}^{*}$.

\hspace{1.34cm}$l_{k}=\max\{f_{k}\,|\, f\in ND_{k-1}^{*}\}-1$.

\hspace{1.34cm}Go to Step 1.

\medskip{}
\end{algorithm}

\begin{rem}
Algorithm \ref{Flo:alg-org} iterates by solving tighter versions
of the CLMOIP problem due to the decreasing values of $l_{k}$ that
restrict the feasible region.
\end{rem}
As can be seen in the numerical example presented in Section \ref{sec:Numerical-example},
this algorithm at the lowest level can end up solving a large number
of IPs that generate the same nondominated objective vectors again
and again. The main reason behind this is that the algorithm does
not make use of already solved subproblems or their solutions.

\section{An improved recursive algorithm\label{sec:Improved-recursive algorithm}}

A major drawback of the recursive algorithm described above is that
it does not store or utilise information on subproblems that have
already been solved. With this in mind, we propose an improved algorithm
that uses such information to avoid solving a large number of low-level
IPs. The main idea is, when solving a new CLMOIP problem, to search
for a \emph{relaxation} of this problem that has been solved before,
enabling us to skip the new CLMOIP problem entirely. We only allow
relaxation of the constraints on individual objectives, thus avoid
any issues that may arise from allowing a linear relaxation. 

The following two lemmas show how a relaxation to a CLMOIP problem
can be used to avoid solving it. Both results are straightforward,
and so we omit the proofs.
\begin{lem}
\label{lem:inf}Let $\mathcal{P}$ be a CLMOIP problem, and let $\mathcal{R}$
be a relaxation of $\mathcal{P}$. If $\mathcal{R}$ is infeasible,
then $\mathcal{P}$ is also infeasible.
\end{lem}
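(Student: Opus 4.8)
The plan is to prove the contrapositive: instead of showing directly that infeasibility of $\mathcal{R}$ forces infeasibility of $\mathcal{P}$, I would show that if $\mathcal{P}$ has any feasible solution then $\mathcal{R}$ must have one too. This is the natural direction because feasibility is a positive, witness-based property: a single feasible point for $\mathcal{P}$ is a concrete object I can try to reuse.

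First I would pin down exactly what ``relaxation'' means in this setting. Since $\mathcal{P}$ and $\mathcal{R}$ are both CLMOIP problems of the form $(\ast)$, the relaxation $\mathcal{R}$ is obtained from $\mathcal{P}$ by weakening its constraints --- concretely, by enlarging the bound $l_k$ (replacing it with some $l_k' \geq l_k$) while keeping the same feasible set $X$ and the same objectives. The key structural fact I would record is therefore the set-theoretic inclusion $\mathrm{feas}(\mathcal{P}) \subseteq \mathrm{feas}(\mathcal{R})$: every point satisfying the constraints of $\mathcal{P}$ (namely $x \in X$ together with $f_k(x) \leq l_k$) also satisfies the constraints of $\mathcal{R}$ (namely $x \in X$ together with $f_k(x) \leq l_k'$), precisely because $l_k \leq l_k'$.

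Given this inclusion the argument is immediate. Suppose, towards the contrapositive, that $\mathcal{P}$ is feasible, so there exists some $x^{\ast} \in \mathrm{feas}(\mathcal{P})$. By the inclusion just established, $x^{\ast} \in \mathrm{feas}(\mathcal{R})$ as well, so $\mathcal{R}$ is feasible. Equivalently, the contrapositive statement ``$\mathcal{P}$ feasible $\Rightarrow$ $\mathcal{R}$ feasible'' is exactly ``$\mathcal{R}$ infeasible $\Rightarrow$ $\mathcal{P}$ infeasible'', which is the claim.

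Honestly, there is no genuine obstacle here --- this is why the authors flag the result as straightforward and omit the proof. The only thing requiring care is being precise about what kind of weakening defines a relaxation, so that the feasible-region inclusion really does hold; once $\mathrm{feas}(\mathcal{P}) \subseteq \mathrm{feas}(\mathcal{R})$ is granted, the lemma is a one-line consequence. If the paper intends a broader notion of relaxation (e.g.\ also dropping constraints or weakening the integrality requirement on $X$), the same inclusion argument still applies verbatim, since any such weakening can only enlarge the feasible set.
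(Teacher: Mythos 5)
Your proof is correct; the paper itself omits the proof (flagging both lemmas as straightforward), and the feasible-region inclusion $\mathrm{feas}(\mathcal{P})\subseteq\mathrm{feas}(\mathcal{R})$ via the contrapositive is exactly the intended argument. Note only that in the paper's setting (Lemma \ref{lem:relax}) a relaxation may enlarge any of the bounds $l_{q+1},\ldots,l_{k}$, not just $l_{k}$, but as your closing remark observes, the same inclusion argument applies verbatim.
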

\vspace{0.01in}

\begin{lem}
\label{lem:fea}Let $\mathcal{P}$ be a CLMOIP problem, and let $\mathcal{R}$
be a relaxation of $\mathcal{P}$. If every nondominated objective
vector for $\mathcal{R}$ is also feasible for $\mathcal{P}$, then
the set of all nondominated objective vectors for $\mathcal{P}$ is
precisely the set of all nondominated objective vectors for $\mathcal{R}$.
\end{lem}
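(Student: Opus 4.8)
The plan is to compare the two problems through their feasible regions. Write $X_{\mathcal{P}}$ and $X_{\mathcal{R}}$ for the feasible sets of $\mathcal{P}$ and $\mathcal{R}$; since $\mathcal{R}$ is a relaxation of $\mathcal{P}$, every point feasible for $\mathcal{P}$ is feasible for $\mathcal{R}$, so $X_{\mathcal{P}}\subseteq X_{\mathcal{R}}$, while both problems carry the same $k$ objectives $f_{1},\ldots,f_{k}$. I would read nondomination in the usual $k$-objective (Pareto) sense of the definition given earlier, so that $ND(\mathcal{P})$ and $ND(\mathcal{R})$ denote the nondominated objective vectors relative to $X_{\mathcal{P}}$ and $X_{\mathcal{R}}$ respectively. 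The goal is the set equality $ND(\mathcal{P})=ND(\mathcal{R})$, which I would establish by two inclusions.

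First I would show $ND(\mathcal{R})\subseteq ND(\mathcal{P})$. Take any $v\in ND(\mathcal{R})$. By hypothesis $v$ is feasible for $\mathcal{P}$, so $v$ is attained by some $x\in X_{\mathcal{P}}$. If $v$ were dominated within $X_{\mathcal{P}}$, the dominating point would lie in $X_{\mathcal{P}}\subseteq X_{\mathcal{R}}$ and would dominate $v$ there too, contradicting $v\in ND(\mathcal{R})$. Hence $v\in ND(\mathcal{P})$. This direction is immediate and uses only the inclusion $X_{\mathcal{P}}\subseteq X_{\mathcal{R}}$ together with the feasibility hypothesis.

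For the reverse inclusion $ND(\mathcal{P})\subseteq ND(\mathcal{R})$, take $v\in ND(\mathcal{P})$, attained by some $x\in X_{\mathcal{P}}\subseteq X_{\mathcal{R}}$, so $v$ is at least feasible for $\mathcal{R}$. Suppose for contradiction that $v\notin ND(\mathcal{R})$, i.e.\ $v$ is dominated in $X_{\mathcal{R}}$. Here I would invoke external stability of the nondominated set: every dominated feasible vector is dominated by some nondominated vector, so there is $w\in ND(\mathcal{R})$ with $w\leq v$ componentwise and $w\neq v$. By the hypothesis $w$ is feasible for $\mathcal{P}$, so it is attained by some $z\in X_{\mathcal{P}}$; then $z$ witnesses that $v$ is dominated within $X_{\mathcal{P}}$, contradicting $v\in ND(\mathcal{P})$.

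The one point needing care---the main obstacle---is the external-stability step in the second inclusion: in general a dominated point need not be dominated by a \emph{nondominated} point, so the existence of the witness $w$ must be justified rather than assumed. I would obtain it from the integer structure: the CLMOIP constraint $f_{k}(x)\leq l_{k}$ bounds $f_{k}$ from above and the minimisation bounds the objectives from below, so the objective image $\{(f_{1}(x),\ldots,f_{k}(x)):x\in X_{\mathcal{R}}\}\subseteq\mathbb{Z}^{k}$ is bounded, hence finite. Over a finite objective set every feasible vector is either nondominated or dominated by a nondominated one, which supplies $w$ and completes the argument.
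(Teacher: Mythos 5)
The paper offers no written proof to compare against: the authors declare Lemmas \ref{lem:inf} and \ref{lem:fea} straightforward and omit the proofs, so your argument fills a deliberate gap rather than paralleling an existing one. Your two-inclusion structure is the right one, and you have correctly identified the only nontrivial point: the inclusion $ND(\mathcal{P})\subseteq ND(\mathcal{R})$ genuinely needs external stability of the nondominated set of $\mathcal{R}$ (a dominated vector must be dominated by a \emph{nondominated} one), and this is exactly where the ``straightforward'' proof has content. The first inclusion is, as you say, immediate from $X_{\mathcal{P}}\subseteq X_{\mathcal{R}}$ together with the feasibility hypothesis.

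One step as written is wrong, though easily repaired. You justify external stability by claiming the objective image of $\mathcal{R}$ is bounded, because $f_{k}(x)\leq l_{k}$ bounds $f_{k}$ above and ``the minimisation bounds the objectives from below''. Minimisation by itself bounds nothing, and in a CLMOIP of the form $(q,l_{q+1},\ldots,l_{k})$ only the objectives $f_{q+1},\ldots,f_{k}$ carry upper-bound constraints; nothing bounds $f_{1},\ldots,f_{q}$ from above, so the objective image need not be finite. What is true, and all you need, is the following: under the standing assumption that each single-objective IP over $X$ is bounded (which the recursive algorithm requires anyway to be well defined), each $f_{i}$ attains a finite minimum $m_{i}$ over the feasible region, so for your fixed dominated vector $v$ the set $\{y\in Y_{\mathcal{R}}:y\leq v\}$ lies inside the finite integer grid $\{y\in\mathbb{Z}^{k}:m_{i}\leq y_{i}\leq v_{i}\text{ for all }i\}$. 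This set is nonempty and finite, hence has a minimal element $w$, and any such minimal element is nondominated in $Y_{\mathcal{R}}$, since a dominator of $w$ would itself lie in the set and contradict minimality. Substituting this for your boundedness claim completes the proof. A final caveat worth noting: in Algorithm \ref{Flo:alg-imp} the ``nondominated set'' attached to a CLMOIP is produced lexicographically (the $q$-objective nondominated vectors extended by minimising the remaining objectives up the recursion stack), so the sets actually reused are these solution sets rather than the plain Pareto set of the constrained region; your Pareto reading is the natural one for the lemma as stated, and essentially the same two-inclusion argument adapts to the lexicographic sets, but a fully faithful treatment would make that identification explicit.
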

\vspace{0.01in}

\begin{rem}
\label{remark}If $\mathcal{R}$ has even a single nondominated objective
vector that is not feasible for $\mathcal{P}$, then the solution
to $\mathcal{R}$ cannot be used to avoid solving $\mathcal{P}$.

As we recurse down through Algorithm \ref{Flo:alg-org}, we accumulate
constraints of the form $f_{i}\leq l_{i}$. In general, each intermediate
CLMOIP problem that we solve is of the form:
\end{rem}
\smallskip{}

Lexicographic objective 1: Min $f_{1}(x),f_{2}(x),\ldots f_{q}(x)$

Lexicographic objective 2: Min $f_{q+1}(x)$

s.t.

$f_{q+1}(x)\leq l_{q+1}$, $f_{q+2}(x)\leq l_{q+2}$, $\ldots$, $f_{k}(x)\leq l_{k}$

$x\in X$.

\smallskip{}

\noindent We denote such a problem using the notation $(q,l_{q+1},l_{q+2},\ldots,l_{k})$.
It is straightforward to identify relaxations using this notation:
\begin{lem}
\label{lem:relax}The CLMOIP problem $(q,l_{q+1}',l_{q+2}',\ldots,l_{k}')$
is a relaxation of $(q,l_{q+1},l_{q+2}\ldots,l_{k})$ if $l_{i}'\geq l_{i}$
for all $i=q+1,\ldots,k$ and if $l_{i}'>l_{i}$ for some $i=q+1,\ldots,k$.\end{lem}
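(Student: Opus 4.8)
The plan is to argue directly from the definition of a relaxation, since the two problems differ only in their constraint bounds. Recall that, as used in Lemmas \ref{lem:inf} and \ref{lem:fea}, $\mathcal{R}$ is a relaxation of $\mathcal{P}$ exactly when every point feasible for $\mathcal{P}$ is also feasible for $\mathcal{R}$, i.e.\ when the feasible region of $\mathcal{P}$ is contained in that of $\mathcal{R}$. I would first note that $(q,l_{q+1}',\ldots,l_k')$ and $(q,l_{q+1},\ldots,l_k)$ share the same index $q$, hence the same lexicographic objectives, and the same underlying set $X$; they differ only in the upper bounds imposed by $f_i(x)\le l_i'$ versus $f_i(x)\le l_i$ for $i=q+1,\ldots,k$.

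Next I would write the feasible region of $\mathcal{P}$ explicitly as
\[
F_{\mathcal{P}}=\{x\in X:\ f_i(x)\le l_i\ \text{for all}\ i=q+1,\ldots,k\},
\]
with $F_{\mathcal{R}}$ defined identically but with each $l_i$ replaced by $l_i'$. Taking an arbitrary $x\in F_{\mathcal{P}}$, the hypothesis $l_i'\ge l_i$ yields $f_i(x)\le l_i\le l_i'$ for every $i$, so $x$ satisfies all of the constraints of $\mathcal{R}$ and hence $x\in F_{\mathcal{R}}$. This establishes $F_{\mathcal{P}}\subseteq F_{\mathcal{R}}$, which is exactly the assertion that $\mathcal{R}$ is a relaxation of $\mathcal{P}$.

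Finally I would address the second hypothesis, that $l_i'>l_i$ for some $i$. This strict inequality is not needed for the containment above --- $l_i'\ge l_i$ already suffices --- but it ensures that $\mathcal{R}$ is a \emph{proper} relaxation, distinct from $\mathcal{P}$ itself. That distinction is what makes the lemma useful in the algorithm: it certifies $\mathcal{R}$ as a genuinely looser, previously solved subproblem whose stored solution can substitute for solving $\mathcal{P}$. I do not expect any real obstacle, as the result is immediate once the notion of relaxation is fixed as feasible-region containment; the only point meriting care is the direction of monotonicity, namely that raising the bounds $l_i$ enlarges rather than shrinks the feasible set, which holds precisely because the constraints $f_i(x)\le l_i$ are upper bounds.
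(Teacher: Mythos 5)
Your proof is correct and is precisely the straightforward argument the paper intends: the paper states this lemma without proof (as with Lemmas \ref{lem:inf} and \ref{lem:fea}, it is deemed immediate), and your feasible-region containment via $f_i(x)\le l_i\le l_i'$, noting that the objectives and $X$ are identical, is the standard justification. You also correctly and silently repaired the paper's typo ``$l_i'\geq i$'' to the evidently intended $l_i'\geq l_i$, and your observation that the strict inequality is not needed for the containment but only makes $\mathcal{R}$ a proper (distinct, previously solved) relaxation is accurate.
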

\begin{rem}
Since Algorithm \ref{Flo:alg-org} iterates by incrementally lowering
the bounds $l_{2},l_{3},\ldots,l_{k}$, as the algorithm progresses
it becomes highly likely that we can find a relaxation of the current
CLMOIP amongst our set of already solved problems. 
\end{rem}
\vspace{0.01in}

\begin{rem}
There could be many relaxations to a given CLMOIP, each with different
nondominated sets---all of the relaxations should be examined until
one is found that allows us to avoid solving the current CLMOIP.
\end{rem}
Using these ideas, Algorithm \ref{Flo:alg-imp} improves the earlier
Algorithm \ref{Flo:alg-org} by making use of already solved CLMOIP
problems and their solution sets.

\begin{algorithm}[h]
\caption{ Improved recursive algorithm to generate nondominated set of MOIP}
\label{Flo:alg-imp}\medskip{}

Step 0. Set $l_{k}=\infty$.

\medskip{}

Step 1. Repeat:

\hspace{1.34cm}\qquad{}Check the list of previously solved CLMOIPs
to find a relaxation to the current CLMOIP problem $(\ast)$.

\hspace{1.34cm}\qquad{}If all nondominated objective vectors for
the relaxation are feasible for the current CLMOIP then,

\hspace{1.34cm}\qquad{}\qquad{}Let that nondominated set be $ND_{k-1}^{*}$
and go to Step 3.

\hspace{1.34cm}\qquad{}If the relaxation is infeasible then,

\hspace{1.34cm}\qquad{}\qquad{}STOP

\hspace{1.34cm}Until there are no other relaxations to the current
CLMOIP.

\medskip{}

Step 2. Solve the CLMOIP problem $(\ast)$, using Algorithm \ref{Flo:alg-imp}
to optimise the first $k-1$ objectives. 

\hspace{1.34cm}If the problem is infeasible then,

\hspace{1.34cm}\qquad{}STOP.

\hspace{1.34cm}Let the ($k-1$)-objective nondominated set be $ND_{k-1}^{*}$

\medskip{}

Step 3. $ND_{k}=ND_{k}\cup ND_{k-1}^{*}$.

\hspace{1.34cm}$l_{k}=\max\{f_{k}\,|\, f\in ND_{k-1}^{*}\}-1$.

\hspace{1.34cm}Go to Step 1.

\medskip{}
\end{algorithm}

Set $ND_{k}$ returned by Algorithm \ref{Flo:alg-imp} resides all
$k$-objective non-dominated objective vectors, stated formally: 
\begin{thm}
Algorithm \ref{Flo:alg-imp} generates all nondominated objective
vectors for the original MOIP problem.\end{thm}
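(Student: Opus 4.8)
The plan is to reduce the correctness of Algorithm \ref{Flo:alg-imp} to that of Algorithm \ref{Flo:alg-org}, whose correctness is already established in \citet{ozlenazizoglu2009ejor}. The two algorithms differ only in Step 1: before solving a CLMOIP directly, Algorithm \ref{Flo:alg-imp} first searches its list of previously solved problems for a relaxation. I would show that, at every level of the recursion and at every iteration, the nondominated set $ND_{k-1}^*$ computed by Algorithm \ref{Flo:alg-imp} is identical to the one that Algorithm \ref{Flo:alg-org} would compute for the same CLMOIP. Since the Step~3 updates (the accumulation into $ND_k$ and the shrinking of $l_k = \max\{f_k \mid f \in ND_{k-1}^*\}-1$) are identical in both algorithms, identical values of $ND_{k-1}^*$ at each step force identical bounds $l_k$, hence an identical sequence of subproblems and an identical final output $ND_k$. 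The theorem then follows from the known correctness of Algorithm \ref{Flo:alg-org}.

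I would proceed by induction on the number of objectives $k$. In the base case the problem reduces to a single-objective IP that is solved directly, so there is nothing to verify. For the inductive step, assume Algorithm \ref{Flo:alg-imp} correctly generates the full nondominated set for every MOIP with fewer than $k$ objectives; this is precisely the recursive call used to optimise the first $k-1$ lexicographic objectives. The argument then splits according to the outcome of the relaxation search in Step~1. If a feasible relaxation $\mathcal{R}$ is found whose nondominated vectors are all feasible for the current problem $\mathcal{P}$, then Lemma \ref{lem:fea} guarantees that the stored nondominated set of $\mathcal{R}$ coincides exactly with the nondominated set of $\mathcal{P}$, so the shortcut returns precisely what a direct solve would have produced. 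If an infeasible relaxation is found, Lemma \ref{lem:inf} guarantees that $\mathcal{P}$ is itself infeasible, so halting matches the behaviour of Algorithm \ref{Flo:alg-org}. If no usable relaxation exists, the algorithm falls through to Step~2 and solves $\mathcal{P}$ directly via the recursion, which is correct by the inductive hypothesis.

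The key invariant to maintain throughout is that every nondominated set recorded in the list of solved CLMOIPs is in fact the true nondominated set of the problem it is associated with. This is where I expect the main care to be needed: a stored set may itself have been obtained through a relaxation shortcut rather than a direct solve, so one must verify that the shortcuts never corrupt the list before the appeals to Lemmas \ref{lem:inf} and \ref{lem:fea} are made against it. I would establish this by a secondary induction on the order in which problems are added to the list---each new entry is either computed by a recursive call (correct by the outer inductive hypothesis) or copied from an earlier, already-correct entry via Lemma \ref{lem:fea}---but it must be stated explicitly rather than taken for granted. Once this invariant is in place, the step-by-step equivalence with Algorithm \ref{Flo:alg-org} is immediate, and the correctness of Algorithm \ref{Flo:alg-imp} follows.
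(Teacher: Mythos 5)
Your proposal is correct and takes essentially the same route as the paper: the published proof likewise reduces everything to the known correctness of Algorithm \ref{Flo:alg-org} from \citet{ozlenazizoglu2009ejor} and appeals to Lemmas \ref{lem:inf} and \ref{lem:fea} to argue that the modified Step~1 does not change the results. Your inductive scaffolding and the explicit invariant that shortcuts never corrupt the list of stored nondominated sets merely spell out what the paper compresses into ``it is clear''---a worthwhile elaboration, but not a different argument.
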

\begin{proof}
\citet{ozlenazizoglu2009ejor} show that Algorithm \ref{Flo:alg-org}
generates all nondominated objective vectors. Algorithm \ref{Flo:alg-imp}
only differs in Step 1, and it is clear from Lemma \ref{lem:inf}
and Lemma \ref{lem:fea} that these changes to Step~1 do not change
the subsequent results.
\end{proof}

\section{Numerical example\label{sec:Numerical-example}}

In this section we illustrate our approach on a numerical example.
We use a randomly generated 4 objective assignment problem with the
objective function coefficients listed in Table \ref{tab:Objective-function-coefficients}.
This problem has 14 nondominated objective vectors that can be identified
using Algorithm \ref{Flo:alg-org}, as presented in Table \ref{tab:Iteration-of-Algorithm}
and its continuation Table \ref{tab:Iteration-of-Algorithm cont}.
The rows of these tables show the various CLMOIP and IP problems as
they are recursively solved.

\begin{table}
\centering
\begin{tabular}{cc|c|c|c|ccc|c|c|c|ccc|c|c|c|ccc|c|c|c|c}
$c_{1}$ & $3$ & $2$ & $4$ & $4$ & $3$ & $c_{2}$ & $2$ & $1$ & $3$ & $2$ & $2$ & $c_{3}$ & $2$ & $5$ & $5$ & $4$ & $4$ & $c_{4}$ & $2$ & $5$ & $4$ & $5$ & $2$\tabularnewline
\cline{2-6} \cline{8-12} \cline{14-18} \cline{20-24} 
 & $5$ & $4$ & $3$ & $4$ & $3$ &  & $3$ & $5$ & $5$ & $1$ & $1$ &  & $4$ & $2$ & $4$ & $2$ & $5$ &  & $2$ & $4$ & $1$ & $2$ & $2$\tabularnewline
\cline{2-6} \cline{8-12} \cline{14-18} \cline{20-24} 
 & $3$ & $1$ & $5$ & $3$ & $4$ &  & $4$ & $3$ & $4$ & $1$ & $5$ &  & $4$ & $1$ & $2$ & $4$ & $3$ &  & $3$ & $3$ & $2$ & $4$ & $1$\tabularnewline
\cline{2-6} \cline{8-12} \cline{14-18} \cline{20-24} 
 & $5$ & $5$ & $4$ & $5$ & $3$ &  & $4$ & $5$ & $5$ & $2$ & $4$ &  & $4$ & $4$ & $1$ & $3$ & $1$ &  & $1$ & $3$ & $4$ & $3$ & $5$\tabularnewline
\cline{2-6} \cline{8-12} \cline{14-18} \cline{20-24} 
 & $1$ & $1$ & $1$ & $1$ & $3$ &  & $3$ & $4$ & $1$ & $5$ & $5$ &  & $4$ & $3$ & $5$ & $4$ & $1$ &  & $2$ & $2$ & $1$ & $3$ & $1$\tabularnewline
\end{tabular}

\caption{Objective function coefficients for the example 4 objective assignment
problem\label{tab:Objective-function-coefficients}}

\end{table}

The first column of these tables shows the CLMOIP problems with $q=3$
that are solved at the highest level of the recursion. For instance,
rows 1--20 follow the CLMOIP problem $(3,\infty)$; that is:

Lexicographic objective 1: Min $f_{1}(x),f_{2}(x),f_{3}(x)$

Lexicographic objective 2: Min $f_{4}(x)$

s.t.

$f_{4}(x)\leq\infty$

$x\in X$.

The second column shows the CLMOIP problems with $q=2$ that appear
at the first level of recursion. For instance, rows 6--9 follow the
problem $(2,22,\infty)$:

Lexicographic objective 1: Min $f_{1}(x),f_{2}(x)$

Lexicographic objective 2: Min $f_{3}(x)$

s.t.

$f_{3}(x)\leq22,f_{4}(x)\leq\infty$

$x\in X$.

The third column shows the CLMOIP problems with $q=1$ at the deepest
level of recursion. For instance, row 7 describes the problem $(1,18,22,\infty)$:

Lexicographic objective 1: Min $f_{1}(x)$

Lexicographic objective 2: Min $f_{2}(x)$

s.t.

$f_{2}(x)\leq18$, $f_{3}(x)\leq22$,$f_{4}(x)\leq\infty$

$x\in X$.

Each of these deepest problems (i.e., each table row) yields a new
IP that Algorithm \ref{Flo:alg-org} must solve. The resulting nondominated
objective vectors (obtained by minimising the remaining lexicographic
objectives up the recursion stack) are given in the columns labelled
$f_{1}(x),\ldots,f_{4}(x)$. Each IP that yields a \emph{new} nondominated
objective vector is marked with an asterisk ($\ast$).

The final ``relaxation'' column shows the improvements that we gain
with the new Algorithm \ref{Flo:alg-imp}: each entry in this column
lists a previously-solved subproblem that allows us to avoid solving
the current CLMOIP. For instance, the problem $(2,10,13)$ can be
avoided by using the relaxation $(2,10,\infty)$, and the problem
$(1,\infty,14,12)$ can be avoided by using the relaxation $(1,\infty,15,12)$.
Illustrating Remark \ref{remark} we cannot use $(2,\infty,\infty)$
to avoid solving $(2,\infty,13)$ although it is a previously solved
relaxation, since one of the solutions of $(2,\infty,\infty)$, namely
$(11,19,12,14)$, is not feasible for $(2,\infty,13)$. 

The results are extremely pleasing: by reusing problems that have
already been solved, Algorithm \ref{Flo:alg-imp} is able to generate
the full nondominated set by solving only $40$ IPs (shown by the
40 rows with no relaxation entry), in contrast to the 79 IPs required
by Algorithm \ref{Flo:alg-org} (corresponding to all 79 rows of the
tables).

\begin{table}
\begin{centering}
\begin{tabular}{cccccccccl}
\hline 
3-obj & 2-obj & 1-obj &  & {\footnotesize $f_{1}(x)$} & {\footnotesize $f_{2}(x)$} & {\footnotesize $f_{3}(x)$} & {\footnotesize $f_{4}(x)$} &  & {\footnotesize Relaxation}\tabularnewline
\hline 
{\footnotesize $(3,\infty)$} & {\footnotesize $(2,\infty,\infty)$} & {\footnotesize $(1,\infty,\infty,\infty)$} & {\footnotesize {*}} & {\footnotesize $11$} & {\footnotesize $19$} & {\footnotesize $12$} & {\footnotesize $14$} & {\footnotesize \enskip{}} & \tabularnewline
\hline 
 &  & {\footnotesize $(1,18,\infty,\infty)$} & {\footnotesize {*}} & {\footnotesize $12$} & {\footnotesize $11$} & {\footnotesize $11$} & {\footnotesize $13$} &  & \tabularnewline
\hline 
 &  & {\footnotesize $(1,10,\infty,\infty)$} & {\footnotesize {*}} & {\footnotesize $13$} & {\footnotesize $9$} & {\footnotesize $16$} & {\footnotesize $11$} &  & \tabularnewline
\hline 
 &  & {\footnotesize $(1,8,\infty,\infty)$} & {\footnotesize {*}} & {\footnotesize $14$} & {\footnotesize $8$} & {\footnotesize $23$} & {\footnotesize $13$} &  & \tabularnewline
\hline 
 &  & {\footnotesize $(1,7,\infty,\infty)$} & {\footnotesize \enskip{}} & {\footnotesize inf} &  &  &  &  & \tabularnewline
\hline 
 & {\footnotesize $(2,22,\infty)$} & {\footnotesize $(1,\infty,22,\infty)$} &  & {\footnotesize $11$} & {\footnotesize $19$} & {\footnotesize $12$} & {\footnotesize $14$} &  & {\footnotesize $(1,\infty,\infty,\infty)$}\tabularnewline
\hline 
 &  & {\footnotesize $(1,18,22,\infty)$} &  & {\footnotesize $12$} & {\footnotesize $11$} & {\footnotesize $11$} & {\footnotesize $13$} &  & {\footnotesize $(1,18,\infty,\infty)$}\tabularnewline
\hline 
 &  & {\footnotesize $(1,10,22,\infty)$} &  & {\footnotesize $13$} & {\footnotesize $9$} & {\footnotesize $16$} & {\footnotesize $11$} &  & {\footnotesize $(1,10,\infty,\infty)$}\tabularnewline
\hline 
 &  & {\footnotesize $(1,8,22,\infty)$} &  & {\footnotesize inf} &  &  &  &  & \tabularnewline
\hline 
 & {\footnotesize $(2,15,\infty)$} & {\footnotesize $(1,\infty,15,\infty)$} &  & {\footnotesize $11$} & {\footnotesize $19$} & {\footnotesize $12$} & {\footnotesize $14$} &  & {\footnotesize $(1,\infty,\infty,\infty)$}\tabularnewline
\hline 
 &  & {\footnotesize $(1,18,15,\infty)$} &  & {\footnotesize $12$} & {\footnotesize $11$} & {\footnotesize $11$} & {\footnotesize $13$} &  & {\footnotesize $(1,18,\infty,\infty)$}\tabularnewline
\hline 
 &  & {\footnotesize $(1,10,15,\infty)$} &  & {\footnotesize inf} &  &  &  &  & \tabularnewline
\hline 
 & \selectlanguage{english}%
{\footnotesize $(2,11,\infty)$}\selectlanguage{british}
 & {\footnotesize $(1,\infty,11,\infty)$} &  & {\footnotesize $12$} & {\footnotesize $11$} & {\footnotesize $11$} & {\footnotesize $13$} &  & \tabularnewline
\hline 
 &  & {\footnotesize $(1,10,11,\infty)$} &  & {\footnotesize inf} &  &  &  &  & {\footnotesize $(1,10,15,\infty)$}\tabularnewline
\hline 
 & \selectlanguage{english}%
{\footnotesize $(2,10,\infty)$}\selectlanguage{british}
 & {\footnotesize $(1,\infty,10,\infty)$} & {\footnotesize {*}} & {\footnotesize $15$} & {\footnotesize $16$} & {\footnotesize $7$} & {\footnotesize $12$} &  & \tabularnewline
\hline 
 &  & {\footnotesize $(1,15,10,\infty)$} & {\footnotesize {*}} & {\footnotesize $16$} & {\footnotesize $15$} & {\footnotesize $10$} & {\footnotesize $13$} &  & \tabularnewline
\hline 
 &  & {\footnotesize $(1,14,10,\infty)$} &  & {\footnotesize inf} &  &  &  &  & \tabularnewline
\hline 
 & \selectlanguage{english}%
{\footnotesize $(2,9,\infty)$}\selectlanguage{british}
 & {\footnotesize $(1,\infty,9,\infty)$} &  & {\footnotesize $15$} & {\footnotesize $16$} & {\footnotesize $7$} & {\footnotesize $12$} &  & {\footnotesize $(1,\infty,10,\infty)$}\tabularnewline
\hline 
 &  & {\footnotesize $(1,15,9,\infty)$} &  & {\footnotesize inf} &  &  &  &  & \tabularnewline
\hline 
 & \selectlanguage{english}%
{\footnotesize $(2,6,\infty)$}\selectlanguage{british}
 & {\footnotesize $(1,\infty,6,\infty)$} &  & {\footnotesize inf} &  &  &  &  & \tabularnewline
\hline 
{\footnotesize $(3,13)$} & {\footnotesize $(2,\infty,13)$} & {\footnotesize $(1,\infty,\infty,13)$} &  & {\footnotesize $12$} & {\footnotesize $11$} & {\footnotesize $11$} & {\footnotesize $13$} &  & \tabularnewline
\hline 
 &  & {\footnotesize $(1,10,\infty,13)$} &  & {\footnotesize $13$} & {\footnotesize $9$} & {\footnotesize $16$} & {\footnotesize $11$} &  & {\footnotesize $(1,10,\infty,\infty)$}\tabularnewline
\hline 
 &  & {\footnotesize $(1,8,\infty,13)$} &  & {\footnotesize $14$} & {\footnotesize $8$} & {\footnotesize $23$} & {\footnotesize $13$} &  & {\footnotesize $(1,8,\infty,\infty)$}\tabularnewline
\hline 
 &  & {\footnotesize $(1,7,\infty,13)$} &  & {\footnotesize inf} &  &  &  &  & {\footnotesize $(1,7,\infty,\infty)$}\tabularnewline
\hline 
 & {\footnotesize $(2,22,13)$} & {\footnotesize $(1,\infty,22,13)$} &  & {\footnotesize $12$} & {\footnotesize $11$} & {\footnotesize $11$} & {\footnotesize $13$} &  & {\footnotesize $(1,\infty,\infty,13)$}\tabularnewline
\hline 
 &  & {\footnotesize $(1,10,22,13)$} &  & {\footnotesize $13$} & {\footnotesize $9$} & {\footnotesize $16$} & {\footnotesize $11$} &  & {\footnotesize $(1,10,\infty,\infty)$}\tabularnewline
\hline 
 &  & {\footnotesize $(1,8,22,13)$} &  & {\footnotesize inf} &  &  &  &  & {\footnotesize $(1,8,22,\infty)$}\tabularnewline
\hline 
 & {\footnotesize $(2,15,13)$} & {\footnotesize $(1,\infty,15,13)$} &  & {\footnotesize $12$} & {\footnotesize $11$} & {\footnotesize $11$} & {\footnotesize $13$} &  & {\footnotesize $(1,\infty,\infty,13)$}\tabularnewline
\hline 
 &  & {\footnotesize $(1,10,15,13)$} &  & {\footnotesize inf} &  &  &  &  & {\footnotesize $(1,10,15,\infty)$}\tabularnewline
\hline 
 & {\footnotesize $(2,10,13)$} & {\footnotesize $(1,\infty,10,13)$} &  & {\footnotesize $15$} & {\footnotesize $16$} & {\footnotesize $7$} & {\footnotesize $12$} &  & {\footnotesize $(2,10,\infty)$}\tabularnewline
\hline 
 &  & {\footnotesize $(1,15,10,13)$} &  & {\footnotesize $16$} & {\footnotesize $15$} & {\footnotesize $10$} & {\footnotesize $13$} &  & {\footnotesize $(2,10,\infty)$}\tabularnewline
\hline 
 &  & {\footnotesize $(1,14,10,13)$} &  & {\footnotesize inf} &  &  &  &  & {\footnotesize $(2,10,\infty)$}\tabularnewline
\hline 
 & {\footnotesize $(2,9,13)$} & {\footnotesize $(1,\infty,9,13)$} &  & {\footnotesize $15$} & {\footnotesize $16$} & {\footnotesize $7$} & {\footnotesize $12$} &  & \selectlanguage{english}%
{\footnotesize $(2,9,\infty)$}\selectlanguage{british}
\tabularnewline
\hline 
 &  & {\footnotesize $(1,15,9,13)$} &  & {\footnotesize inf} &  &  &  &  & \selectlanguage{english}%
{\footnotesize $(2,9,\infty)$}\selectlanguage{british}
\tabularnewline
\hline 
 & {\footnotesize $(2,6,13)$} & {\footnotesize $(1,\infty,6,13)$} &  & {\footnotesize inf} &  &  &  &  & \selectlanguage{english}%
{\footnotesize $(2,6,\infty)$}\selectlanguage{british}
\tabularnewline
\hline 
{\footnotesize $(3,12)$} & {\footnotesize $(2,\infty,12)$} & {\footnotesize $(1,\infty,\infty,12)$} &  & {\footnotesize $13$} & {\footnotesize $9$} & {\footnotesize $16$} & {\footnotesize $11$} &  & \tabularnewline
\hline 
 &  & {\footnotesize $(1,8,\infty,12)$} &  & {\footnotesize inf} &  &  &  &  & \tabularnewline
\hline 
 & {\footnotesize $(2,15,12)$} & {\footnotesize $(1,\infty,15,12)$} &  & {\footnotesize $15$} & {\footnotesize $16$} & {\footnotesize $7$} & {\footnotesize $12$} &  & \tabularnewline
\hline 
 &  & {\footnotesize $(1,15,15,12)$} & {\footnotesize {*}} & {\footnotesize $17$} & {\footnotesize $13$} & {\footnotesize $15$} & {\footnotesize $11$} &  & \tabularnewline
\hline 
 &  & {\footnotesize $(1,12,15,12)$} &  & {\footnotesize inf} &  &  &  &  & \tabularnewline
\hline 
 & {\footnotesize $(2,14,12)$} & {\footnotesize $(1,\infty,14,12)$} &  & {\footnotesize $15$} & {\footnotesize $16$} & {\footnotesize $7$} & {\footnotesize $12$} &  & {\footnotesize $(1,\infty,15,12)$}\tabularnewline
\hline 
 &  & {\footnotesize $(1,15,14,12)$} & {\footnotesize {*}} & {\footnotesize $19$} & {\footnotesize $15$} & {\footnotesize $14$} & {\footnotesize $11$} &  & \tabularnewline
\hline 
 &  & {\footnotesize $(1,14,14,12)$} &  & {\footnotesize inf} &  &  &  &  & \tabularnewline
\hline 
 & {\footnotesize $(2,13,12)$} & {\footnotesize $(1,\infty,13,12)$} &  & {\footnotesize $15$} & {\footnotesize $16$} & {\footnotesize $7$} & {\footnotesize $12$} &  & {\footnotesize $(1,\infty,15,12)$}\tabularnewline
\hline 
 &  & {\footnotesize $(1,15,13,12)$} &  & {\footnotesize inf} &  &  &  &  & \tabularnewline
\hline 
 & {\footnotesize $(2,6,12)$} & {\footnotesize $(1,\infty,6,12)$} &  & {\footnotesize inf} &  &  &  &  & \selectlanguage{english}%
{\footnotesize $(2,6,\infty)$}\selectlanguage{british}
\tabularnewline
\hline 
\end{tabular}
\par\end{centering}

\caption{Iteration of Algorithm \ref{Flo:alg-org} and Algorithm \ref{Flo:alg-imp}
on a numerical example\label{tab:Iteration-of-Algorithm}}
\end{table}

\begin{table}
\begin{centering}
\begin{tabular}{cccccccccl}
\hline 
3-obj & 2-obj & 1-obj &  & {\footnotesize $f_{1}(x)$} & {\footnotesize $f_{2}(x)$} & {\footnotesize $f_{3}(x)$} & {\footnotesize $f_{4}(x)$} &  & {\footnotesize Relaxation}\tabularnewline
\hline 
{\footnotesize $(3,11)$} & {\footnotesize $(2,\infty,11)$} & {\footnotesize $(1,\infty,\infty,11)$} & {\footnotesize \enskip{}} & {\footnotesize $13$} & {\footnotesize $9$} & {\footnotesize $16$} & {\footnotesize $11$} & {\footnotesize \enskip{}} & {\footnotesize $(2,\infty,12)$}\tabularnewline
\hline 
 &  & {\footnotesize $(1,8,\infty,11)$} &  & {\footnotesize inf} &  &  &  &  & {\footnotesize $(2,\infty,12)$}\tabularnewline
\hline 
 & {\footnotesize $(2,15,11)$} & {\footnotesize $(1,\infty,15,11)$} & {\footnotesize {*}} & {\footnotesize $15$} & {\footnotesize $17$} & {\footnotesize $11$} & {\footnotesize $10$} &  & \tabularnewline
\hline 
 &  & {\footnotesize $(1,16,15,11)$} &  & {\footnotesize $17$} & {\footnotesize $13$} & {\footnotesize $15$} & {\footnotesize $11$} &  & \tabularnewline
\hline 
 &  & {\footnotesize $(1,12,15,11)$} &  & {\footnotesize inf} &  &  &  &  & {\footnotesize $(1,12,15,12)$}\tabularnewline
\hline 
 & {\footnotesize $(2,14,11)$} & {\footnotesize $(1,\infty,14,11)$} &  & {\footnotesize $15$} & {\footnotesize $17$} & {\footnotesize $11$} & {\footnotesize $10$} &  & {\footnotesize $(1,\infty,15,11)$}\tabularnewline
\hline 
 &  & {\footnotesize $(1,16,14,11)$} & {\footnotesize {*}} & {\footnotesize $17$} & {\footnotesize $16$} & {\footnotesize $13$} & {\footnotesize $11$} &  & \tabularnewline
\hline 
 &  & {\footnotesize $(1,15,14,11)$} &  & {\footnotesize $19$} & {\footnotesize $15$} & {\footnotesize $14$} & {\footnotesize $11$} &  & \tabularnewline
\hline 
 &  & {\footnotesize $(1,14,14,11)$} &  & {\footnotesize inf} &  &  &  &  & {\footnotesize $(1,14,14,12)$}\tabularnewline
\hline 
 & {\footnotesize $(2,13,11)$} & {\footnotesize $(1,\infty,13,11)$} &  & {\footnotesize $15$} & {\footnotesize $17$} & {\footnotesize $11$} & {\footnotesize $10$} &  & {\footnotesize $(1,\infty,15,11)$}\tabularnewline
\hline 
 &  & {\footnotesize $(1,16,13,11)$} &  & {\footnotesize $17$} & {\footnotesize $16$} & {\footnotesize $13$} & {\footnotesize $11$} &  & {\footnotesize $(1,16,15,11)$}\tabularnewline
\hline 
 &  & {\footnotesize $(1,15,13,11)$} &  & {\footnotesize inf} &  &  &  &  & {\footnotesize $(1,15,13,12)$}\tabularnewline
\hline 
 & {\footnotesize $(2,12,11)$} & {\footnotesize $(1,\infty,12,11)$} &  & {\footnotesize $15$} & {\footnotesize $17$} & {\footnotesize $11$} & {\footnotesize $10$} &  & {\footnotesize $(1,\infty,15,11)$}\tabularnewline
\hline 
 &  & {\footnotesize $(1,16,12,11)$} &  & {\footnotesize inf} &  &  &  &  & \tabularnewline
\hline 
 & {\footnotesize $(2,10,11)$} & {\footnotesize $(1,\infty,10,11)$} &  & {\footnotesize inf} &  &  &  &  & \tabularnewline
\hline 
{\footnotesize $(3,10)$} & {\footnotesize $(2,\infty,10)$} & {\footnotesize $(1,\infty,\infty,10)$} & {\footnotesize {*}} & {\footnotesize $13$} & {\footnotesize $19$} & {\footnotesize $17$} & {\footnotesize $10$} &  & \tabularnewline
\hline 
 &  & {\footnotesize $(1,18,\infty,10)$} & {\footnotesize {*}} & {\footnotesize $14$} & {\footnotesize $11$} & {\footnotesize $16$} & {\footnotesize $9$} &  & \tabularnewline
\hline 
 &  & {\footnotesize $(1,10,\infty,10)$} &  & {\footnotesize inf} &  &  &  &  & \tabularnewline
\hline 
 & {\footnotesize $(2,16,10)$} & {\footnotesize $(1,\infty,16,10)$} &  & {\footnotesize $14$} & {\footnotesize $11$} & {\footnotesize $16$} & {\footnotesize $9$} &  & \tabularnewline
\hline 
 &  & {\footnotesize $(1,10,16,10)$} &  & {\footnotesize inf} &  &  &  &  & {\footnotesize $(1,10,\infty,10)$}\tabularnewline
\hline 
 & {\footnotesize $(2,15,10)$} & {\footnotesize $(1,\infty,15,10)$} &  & {\footnotesize $15$} & {\footnotesize $17$} & {\footnotesize $11$} & {\footnotesize $10$} &  & {\footnotesize $(1,\infty,15,11)$}\tabularnewline
\hline 
 &  & {\footnotesize $(1,16,15,10)$} & {\footnotesize {*}} & {\footnotesize $18$} & {\footnotesize $15$} & {\footnotesize $15$} & {\footnotesize $9$} &  & \tabularnewline
\hline 
 &  & {\footnotesize $(1,14,15,10)$} &  & {\footnotesize inf} &  &  &  &  & \tabularnewline
\hline 
 & {\footnotesize $(2,14,10)$} & {\footnotesize $(1,\infty,14,10)$} &  & {\footnotesize $15$} & {\footnotesize $17$} & {\footnotesize $11$} & {\footnotesize $10$} &  & {\footnotesize $(1,\infty,15,11)$}\tabularnewline
\hline 
 &  & {\footnotesize $(1,16,14,10)$} &  & {\footnotesize inf} &  &  &  &  & \tabularnewline
\hline 
 & {\footnotesize $(2,10,10)$} & {\footnotesize $(1,\infty,10,10)$} &  & {\footnotesize inf} &  &  &  &  & {\footnotesize $(2,10,11)$}\tabularnewline
\hline 
{\footnotesize $(3,9)$} & {\footnotesize $(2,\infty,9)$} & {\footnotesize $(1,\infty,\infty,9)$} &  & {\footnotesize $14$} & {\footnotesize $11$} & {\footnotesize $16$} & {\footnotesize $9$} &  & \tabularnewline
\hline 
 &  & {\footnotesize $(1,10,\infty,9)$} &  & {\footnotesize inf} &  &  &  &  & {\footnotesize $(1,10,\infty,10)$}\tabularnewline
\hline 
 & {\footnotesize $(2,15,9)$} & {\footnotesize $(1,\infty,15,9)$} & {\footnotesize {*}} & {\footnotesize $16$} & {\footnotesize $18$} & {\footnotesize $15$} & {\footnotesize $9$} &  & \tabularnewline
\hline 
 &  & {\footnotesize $(1,17,15,9)$} &  & {\footnotesize $18$} & {\footnotesize $15$} & {\footnotesize $15$} & {\footnotesize $9$} &  & \tabularnewline
\hline 
 &  & {\footnotesize $(1,14,15,9)$} &  & {\footnotesize inf} &  &  &  &  & {\footnotesize $(1,14,15,10)$}\tabularnewline
\hline 
 & {\footnotesize $(2,14,9)$} & {\footnotesize $(1,\infty,14,9)$} &  & {\footnotesize inf} &  &  &  &  & \tabularnewline
\hline 
{\footnotesize $(3,8)$} & {\footnotesize $(2,\infty,8)$} & {\footnotesize $(1,\infty,\infty,8)$} &  & {\footnotesize inf} &  &  &  &  & \tabularnewline
\hline 
\end{tabular}
\par\end{centering}

\caption{Iteration of Algorithm \ref{Flo:alg-org} and Algorithm \ref{Flo:alg-imp}
on a numerical example (cont.)\label{tab:Iteration-of-Algorithm cont}}
\end{table}

\section{Computational experiment\label{sec:Computational-experience}}

\citet{przybylskigandibleuxehrgott2010do} perform computational experiments
using 3-objective assignment problems and compare CPU times of various
non-dominated set generation algorithms including \citet{sylvacrema2004ejor}
and \citet{laumannsthiele2006}. Their results identify \citet{laumannsthiele2006}
as the best general algorithm with other algorithms failing to generate
the set, for even small problem instances, within a reasonable amount
of time. As such, we compare an improved version of \citet{laumannsthiele2006}
as discussed in \citet{laumanns_et_al:DSP:2005:246} with \citet{ozlenazizoglu2009ejor}
and the improved recursive algorithm using the 3-objective 3-dimensional
Knapsack (3O3DKP) problem instances from \citet{laumannsthiele2006}
in Table \ref{tab:CPU-time 3-KP}. \citet{ozlenazizoglu2009ejor}
is significantly faster compared to \citet{laumanns_et_al:DSP:2005:246}
in solving 303DKPs and this behaviour becomes more significant with
the growing problem size. The improved algorithm introduced in this
paper is significantly faster compared to \citet{ozlenazizoglu2009ejor}
for all problem sizes while solving 3O3DKPs. 

\begin{table}
\begin{centering}
\begin{tabular}{|c|r|r|r|r|r|r|}
\hline 
\multirow{2}{*}{$n$} & \multirow{2}{*}{{\footnotesize |$\mbox{\ensuremath{ND}}$|}} & \citet{laumanns_et_al:DSP:2005:246} & \multicolumn{2}{c|}{\citet{ozlenazizoglu2009ejor}} & \multicolumn{2}{c|}{Improved Algorithm}\tabularnewline
\cline{3-7} 
 &  & CPU time (secs) & CPU time (secs) & \# IP & CPU time (secs) & \# IP\tabularnewline
\hline 
$10$ & $9$ & $0.80$ & $0.30$ & $58$ & $0.22$ & $46$\tabularnewline
\hline 
$20$ & $61$ & $20.40$ & $10.09$ & $1119$ & $3.99$ & $333$\tabularnewline
\hline 
$30$ & $195$ & $391.24$ & $117.93$ & $4705$ & $35.65$ & $1204$\tabularnewline
\hline 
$40$ & $389$ & $1046.01$ & $416.83$ & $13628$ & $84.27$ & $2357$\tabularnewline
\hline 
$50$ & $1048$ & $7081.07$ & $2044.65$ & $38683$ & $422.52$ & $6001$\tabularnewline
\hline 
$100$ & $6500$ & $403937.83$ & $82420.58$ & $207515$ & $21358.38$ & $35450$\tabularnewline
\hline 
\end{tabular}
\par\end{centering}

\caption{Comparison of \citet{laumanns_et_al:DSP:2005:246}, \citet{ozlenazizoglu2009ejor}
and improved recursive algorithm on 3O3DKP\label{tab:CPU-time 3-KP}}
\end{table}

We also perform experimentation using the 3-objective assignment problem
(3OAP) instances from \citet{przybylskigandibleuxehrgott2010do} and
generate additional objectives using a similar distribution to theirs
to test the performance of \citet{ozlenazizoglu2009ejor} and the
improved recursive algorithm on problems with more than 3 objectives
(MOAP). The results are available in Table \ref{tab:CPU-time-AP}.
The improved algorithm is faster compared to \citet{ozlenazizoglu2009ejor}
on MOAP. CPU time improvement becomes more significant with increasing
number of objectives where the improved algorithm cuts the number
of IPs solved quite effectively using the relaxations. For instance,
the new algorithm improves the CPU time by 90\% in solving the 4-objective
20 rows AP.

\begin{table}
\begin{centering}
\begin{tabular}{|c|c|r|r|r|r|r|}
\hline 
\multirow{2}{*}{$n$} & \multirow{2}{*}{$k$} & \multirow{2}{*}{{\footnotesize |$\mbox{\ensuremath{ND}}$|}} & \multicolumn{2}{c|}{\citet{ozlenazizoglu2009ejor}} & \multicolumn{2}{c|}{Improved Algorithm}\tabularnewline
\cline{4-7} 
 &  &  & CPU time (secs) & \# IPs solved & CPU time (secs) & \# IPs solved\tabularnewline
\hline 
$5x5$ & $3$ & $12$ & $0.12$ & $152$ & $0.05$ & $71$\tabularnewline
\hline 
 & $4$ & $33$ & $4.24$ & $4332$ & $0.63$ & $704$\tabularnewline
\hline 
 & $5$ & $34$ & $41.59$ & $42229$ & $2.53$ & $3524$\tabularnewline
\hline 
$10x10$ & $3$ & $221$ & $25.61$ & $4062$ & $8.61$ & $1158$\tabularnewline
\hline 
 & $4$ & $736$ & $2771.80$ & $316448$ & $214.98$ & $16268$\tabularnewline
\hline 
$15x15$ & $3$ & $483$ & $79.07$ & $8367$ & $25.25$ & $2268$\tabularnewline
\hline 
 & $4$ & $7855$ & $21075.83$ & $1405070$ & $2580.15$ & $122986$\tabularnewline
\hline 
$20x20$ & $3$ & $1942$ & $450.72$ & $29710$ & $163.66$ & $9055$\tabularnewline
\hline 
 & $4$ & $22837$ & $146813.61$ & $5568823$ & $11808.15$ & $323703$\tabularnewline
\hline 
$25x25$ & $3$ & $3750$ & $784.80$ & $33803$ & $401.14$ & $15320$\tabularnewline
\hline 
$30x30$ & $3$ & $5195$ & $1765.90$ & $55272$ & $819.73$ & $22410$\tabularnewline
\hline 
$35x35$ & $3$ & $10498$ & $4119.68$ & $96161$ & $2062.49$ & $41828$\tabularnewline
\hline 
$40x40$ & $3$ & $14733$ & $5204.42$ & $111096$ & $3066.30$ & $55935$\tabularnewline
\hline 
$45x45$ & $3$ & $23942$ & $12069.63$ & $190405$ & $6455.39$ & $91780$\tabularnewline
\hline 
$50x50$ & $3$ & $29193$ & $16464.46$ & $215528$ & $9108.42$ & $109142$\tabularnewline
\hline 
\end{tabular}
\par\end{centering}

\caption{Comparison of \citet{ozlenazizoglu2009ejor} and improved recursive
algorithm on MOAP\label{tab:CPU-time-AP}}
\end{table}

In order to see the performance of the improved algorithm on harder
MOIP problems, we experiment using multi-objective travelling salesman
(MOTSP) instances from \citet{ozpeynirci20102302}. We also generate
some problems with higher number of objectives using their problem
generator, the results are available in Table \ref{tab:CPU-time-TSP}.
The usage of relaxations bring great CPU time improvements for the
MOTSP as well, and we see over 95\% cut in the CPU time for the 4-objective
10-city TSP problem.

\begin{table}
\centering{}%
\begin{tabular}{|c|c|c|r|r|r|r|}
\hline 
\multirow{2}{*}{$n$} & \multirow{2}{*}{$k$} & {\footnotesize |$\mbox{\ensuremath{ND}}$|} & \multicolumn{2}{c|}{\citet{ozlenazizoglu2009ejor}} & \multicolumn{2}{c|}{Improved Algorithm}\tabularnewline
\cline{3-7} 
 &  &  & CPU time (secs) & \# IPs solved & CPU time (secs) & \# IPs solved\tabularnewline
\hline 
$5$ & $3$ & $8$ & $2.31$ & $75$ & $1.51$ & $45$\tabularnewline
\hline 
 & $4$ & $10$ & $8.00$ & $453$ & $3.13$ & $141$\tabularnewline
\hline 
$10$ & $3$ & $94$ & $1006.27$ & $3410$ & $183.15$ & $605$\tabularnewline
\hline 
 & $4$ & $561$ & $281300.50$ & $1052981$ & $11984.49$ & $39665$\tabularnewline
\hline 
$15$ & $3$ & $560$ & $32841.38$ & $41702$ & $3557.41$ & $3662$\tabularnewline
\hline 
\end{tabular}\caption{Comparison of \citet{ozlenazizoglu2009ejor} and improved recursive
algorithm on MOTSP\label{tab:CPU-time-TSP}}
\end{table}

All computations are carried out on a single core of an Intel Core
i7-2600 processor on a machine with 8 GB of RAM with hyper-threading
and turbo-boasting disabled. GCC 4.7 is used to compile the code with
default options, and CPLEX 12.5 with a single thread and default settings
is used as the integer programming solver. A general C implementation
of the improved recursive algorithm to solve problems, input in an
extended LP file format, with arbitrary number of objectives is available
at \href{https://bitbucket.org/melihozlen/moip_aira/}{https://bitbucket.org/melihozlen/moip\_{}aira/}. 

The large and diverse set of problems in terms of their difficulty
that we use to compare the original and improved algorithms show that
the new algorithm brings significant improvements in terms of the
CPU time. The improvements become even larger when the number of objectives
increases.

\section{Conclusion\label{sec:conclusion}}

We propose a significant improvement on the recursive algorithm developed
by \citet{ozlenazizoglu2009ejor}, based on the systematic reuse of
solutions to relaxations of intermediate CLMOIP problems. Our numerical
example and computational experiments show that the CPU time required
by this new algorithm is significantly smaller than the requirement
of the original algorithm, and moreover this improvement becomes more
pronounced with increasing number of objectives. 

One important feature of the improved recursive algorithm (which it
inherits from the original) is that it may be used to generate only
a subset of the nondominated objective vectors. This feature is important
in cases where there are known restrictions on the individual objective
function values, or where an explicitly known utility function is
provided by the decision maker. 

One area for future research is the development of domain-specific
approaches to Multi Objective Combinatorial Optimisation (MOCO) problems
that make similar improvements to avoid solving a large number of
subproblems.

Another area might be to develop algorithms to solve MOIP which can
take advantage of the highly parallel computing resources as most
of the existing algorithms are limited to using a single core or thread.

\section*{Acknowledgements}

We are thankful to anonymous reviewers for their constructive comments
that helped us improve this paper substantially. Dr. Marco Laumanns
kindly sent us the code and problem instances of \citet{laumanns_et_al:DSP:2005:246}.
Dr. {\"O}zgur {\"O}zpeynirci kindly sent us their problem generator
and instances from \citet{ozpeynirci20102302}. Dr. Anthony Przybylski
kindly sent us their problem instances from \citet{przybylskigandibleuxehrgott2010do}.
The second author is supported by the Australian Research Council
under the Discovery Projects funding scheme (project DP1094516).

\bibliographystyle{plainnat}

\vspace{2cm}

\noindent Melih Ozlen\\
School of Mathematical and Geospatial Sciences, RMIT University\\
GPO Box 2476V, Melbourne VIC 3001, Australia\\
(melih.ozlen@rmit.edu.au)

\bigskip{}

\noindent Benjamin A.~Burton\\
School of Mathematics and Physics, The University of Queensland\\
Brisbane QLD 4072, Australia\\
(bab@maths.uq.edu.au)

\bigskip{}

\noindent Cameron A. G. MacRae

\noindent School of Mathematical and Geospatial Sciences, RMIT University\\
GPO Box 2476V, Melbourne VIC 3001, Australia\\
(cameron.macrae@rmit.edu.au)
\end{document}